\renewcommand*{\backref}[1]{}
\renewcommand*{\backrefalt}[4]{\tiny
  \ifcase #1 (\textbf{NOT CITED.})%
  \or    (Cited on page~#2.)%
  \else   (Cited on pages~#2.)%
  \fi}
\def\MRbibitem{\@ifnextchar[\my@lbibitem\my@bibitem}
\def\mybiblabel#1#2{\@biblabel{{\hyperref{http://www.ams.org/mathscinet-getitem?mr=#1}{}{}{#2}}}}
\def\myhyperanchor#1{\Hy@raisedlink{\hyper@anchorstart{cite.#1}\hyper@anchorend}}
\def\my@lbibitem[#1]#2#3#4\par{%
  \item[\mybiblabel{#2}{#1}\myhyperanchor{#3}\hfill]#4%
  \@ifundefined{ifbackrefparscan}{}{\BR@backref{#3}}%
  \if@filesw{\let\protect\noexpand\immediate
    \write\@auxout{\string\bibcite{#3}{#1}}}\fi\ignorespaces%
}
\def\my@bibitem#1#2#3\par{%
  \refstepcounter\@listctr
  \item[\mybiblabel{#1}{\the\value\@listctr}\myhyperanchor{#2}\hfill]#3%
  \@ifundefined{ifbackrefparscan}{}{\BR@backref{#2}}%
  \if@filesw\immediate\write\@auxout
    {\string\bibcite{#2}{\the\value\@listctr}}\fi\ignorespaces%
}
\DeclareFontFamily{U} {MnSymbolA}{}
\DeclareFontShape{U}{MnSymbolA}{m}{n}{
   <-6> MnSymbolA5
   <6-7> MnSymbolA6
   <7-8> MnSymbolA7
   <8-9> MnSymbolA8
   <9-10> MnSymbolA9
   <10-12> MnSymbolA10
   <12-> MnSymbolA12}{}
\DeclareFontShape{U}{MnSymbolA}{b}{n}{
   <-6> MnSymbolA-Bold5
   <6-7> MnSymbolA-Bold6
   <7-8> MnSymbolA-Bold7
   <8-9> MnSymbolA-Bold8
   <9-10> MnSymbolA-Bold9
   <10-12> MnSymbolA-Bold10
   <12-> MnSymbolA-Bold12}{}
\DeclareSymbolFont{MnSyA} {U} {MnSymbolA}{m}{n}
 \DeclareFontFamily{U} {MnSymbolC}{}
\DeclareFontShape{U}{MnSymbolC}{m}{n}{
  <-6> MnSymbolC5
  <6-7> MnSymbolC6
  <7-8> MnSymbolC7
  <8-9> MnSymbolC8
  <9-10> MnSymbolC9
  <10-12> MnSymbolC10
  <12-> MnSymbolC12}{}
\DeclareFontShape{U}{MnSymbolC}{b}{n}{
  <-6> MnSymbolC-Bold5
  <6-7> MnSymbolC-Bold6
  <7-8> MnSymbolC-Bold7
  <8-9> MnSymbolC-Bold8
  <9-10> MnSymbolC-Bold9
  <10-12> MnSymbolC-Bold10
  <12-> MnSymbolC-Bold12}{}
\DeclareSymbolFont{MnSyC} {U} {MnSymbolC}{m}{n}
\DeclareMathSymbol{\top}{\mathord}{MnSyA}{219} 
\DeclareMathSymbol{\plus}{\mathord}{MnSyC}{20} 
\declaretheorem[numberwithin=section]{theorem}
\declaretheorem[sibling=theorem]{lemma}
\declaretheorem[sibling=theorem]{proposition}
\declaretheorem[sibling=theorem,style=definition]{definition}
\declaretheorem[sibling=theorem,style=remark]{remark}
\numberwithin{equation}{section}     
\setlist[enumerate,1]{label={\upshape(\alph*)},ref=\alph*}
\setlist[enumerate,2]{label={\upshape(\arabic*)},ref=\arabic*}
\newcommand{\M}{\mathcal{M}}
\newcommand{\R}{\mathbb{R}}
\newcommand{\Z}{\mathbb{Z}}
\newcommand{\N}{\mathbb{N}}
\newcommand{\E}{\mathbb{E}}
\newcommand{\cE}{\mathcal{E}}
\def\phi{\varphi}
\def\R{{\mathbb R}}
\def\N{{\mathbb N}}
\def\Z{{\mathbb Z}}
\def\E{{\mathcal E}}
\def\M{{\mathcal M}}
\def\le{\leqslant}
\def\ge{\geqslant}
\def\M{\mathcal{M}}
\newcommand{\vertiii}[1]{{\left\vert\kern-0.25ex\left\vert\kern-0.25ex\left\vert #1 
    \right\vert\kern-0.25ex\right\vert\kern-0.25ex\right\vert}}
\newcommand{\invertiii}[1]{{\vert\kern-0.25ex\vert\kern-0.25ex\vert #1 
    \vert\kern-0.25ex\vert\kern-0.25ex\vert}}
\begin{document}

\title{Measures of maximal entropy for suspension flows}
\date{\today}

\subjclass[2010]{37D35, 37A10, 37A35}

\begin{thanks}
{ G.I.\ was partially supported  by Proyecto Fondecyt 1190194 and by CONICYT PIA ACT172001}
\end{thanks}

\author[G.~Iommi]{Godofredo Iommi}
\address{Facultad de Matem\'aticas,
Pontificia Universidad Cat\'olica de Chile (PUC), Avenida Vicu\~na Mackenna 4860, Santiago, Chile}
\email{\href{giommi@mat.uc.cl}{giommi@mat.uc.cl}}
\urladdr{\url{http://http://www.mat.uc.cl/~giommi/}}

 \author[A.~Velozo]{Anibal Velozo}  \address{Department of Mathematics, Yale University, New Haven, CT 06511, USA.}
\email{\href{anibal.velozo@gmail.com}{anibal.velozo@yale.edu}}
\urladdr{\href{https://gauss.math.yale.edu/~av578/}{https://gauss.math.yale.edu/~av578/}}

\begin{abstract}
We study suspension flows defined over sub-shifts of finite type with continuous roof functions. We prove the existence of suspension flows with uncountably many ergodic measures of maximal entropy. More generally, we prove that any suspension flow defined over a sub-shift of finite type can be perturbed (by an arbitrarily small perturbation) so that the resulting flow has uncountably many ergodic measures of maximal entropy, and that the same can be arranged so that the new flow has a unique measure of maximal entropy. 
\end{abstract}

\maketitle

\section{Introduction}
In this article we study ergodic theory of suspension flows. These are continuous time dynamical systems defined over a base given by a discrete time system together with a roof function that determines the time the flow spends before returning to the base. We will consider the case when we have sub-shifts of finite type as base and continuous positive functions as roofs.

Suspension flows have been used as models for different classes of differentiable flows at least since the work of Hadamard \cite{h}, in which he studied closed geodesics in surfaces of negative curvature. In 1973, Bowen \cite{bo} and Ratner \cite{ra} constructed Markov partitions that 
allowed for the coding of Axiom A flows by suspension flows over sub-shifts of finite type with H\"older continuous roof functions. Later, Pollicott \cite{po} extended these constructions and was able to code Smale flows (which are a generalization of Axiom A flows) with suspension flows defined over sub-shifts of finite type with continuos roof functions. More recently, symbolic systems have been constructed for flows that are defined on non-compact surfaces or even having non-uniform forms of hyperbolicity. In these cases the base symbolic systems are countable Markov shifts (non-compact generalizations of sub-shifts of finite type), see for example \cite{ls}.

Thermodynamic formalism for suspension flows defined over sub-shifts of finite type with H\"older roof functions is well understood. There exists a unique measure of maximal entropy, the pressure is real analytic and H\"older functions have unique equilibrium measures (see Section \ref{pre} for definitions and \cite{br,pp} for details). In this article we show that the situation drastically changes if we only assume the roof function to be continuous. Indeed, we  prove the existence of a suspension flow with uncountably many ergodic measures of maximal entropy (see Proposition \ref{existence1}). 
In fact, we prove a much more general result: by an arbitrarily small time change, any suspension flow defined over a sub-shift of finite type can be perturbed so that the resulting flow has uncountably many ergodic measures of maximal entropy. We
also prove that the same can be arranged so that the new flow has a unique measure of maximal entropy. The main result of this article is the following,
\begin{theorem}\label{thm:1}
Let $(\Sigma, \sigma)$ be a topologically transitive one or two-sided sub-shift of finite type. Let $\tau: \Sigma \to \R$ be a positive continuous potential and $(Y_{\tau}, \Phi_{\tau})$ the corresponding suspension flow. Let $\epsilon>0$, then
\begin{enumerate}
\item\label{11} There exists a positive continuous function  $\tau_1: \Sigma \to \R$ with $\|\tau- \tau_1 \|_0<\epsilon$ such that the corresponding suspension flow  $(Y_{\tau_1}, \Phi_{\tau_1})$ has uncountably many ergodic measures of maximal entropy.
\item\label{12} There exists a positive continuous function  $\tau_2: \Sigma \to \R$ with $\|\tau- \tau_2\|_0<\epsilon$ such that the corresponding suspension flow  $(Y_{\tau_2}, \Phi_{\tau_2})$ has a unique measure of maximal entropy.
\end{enumerate}
\end{theorem}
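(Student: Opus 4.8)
The plan is to reduce both statements to a problem about equilibrium states on the base and then to build the roof functions by hand. Recall from Section~\ref{pre} that, via Abramov's formula and the variational principle for suspension flows, the $\Phi_{\tau_1}$-invariant probability measures are in bijection with the $\sigma$-invariant probability measures $\mu$ on $\Sigma$, the flow-entropy of the measure associated with $\mu$ being $h_\sigma(\mu)/\int \tau_1\,d\mu$. Consequently $\htop(\Phi_{\tau_1})$ equals the unique $h>0$ for which $P(\sigma,-h\tau_1)=0$ (the function $s\mapsto P(\sigma,-s\tau_1)$ is continuous, strictly decreasing since $\tau_1\geq\min\tau_1>0$, positive at $s=0$, and $-\infty$ at infinity), and a measure is a measure of maximal entropy for the flow if and only if the corresponding $\mu$ is an equilibrium state of the potential $-h\tau_1$, that is
\[
h_\sigma(\mu)-h\int \tau_1\,d\mu=0 .
\]
Thus part~(\ref{11}) asks for a continuous roof, $\epsilon$-close to $\tau$, whose associated potential $-h\tau_1$ has uncountably many ergodic equilibrium states, while part~(\ref{12}) asks for one whose potential has a unique equilibrium state.

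For part~(\ref{12}) I would approximate and invoke the classical theory. Since $\Sigma$ is compact and $\tau$ is continuous and bounded below by $\min\tau>0$, $\tau$ is uniformly continuous and can be approximated in $\|\cdot\|_0$, within $\min(\epsilon,\tfrac12\min\tau)$, by a function $\tau_2$ depending on finitely many coordinates; such a $\tau_2$ is positive and Lipschitz, hence H\"older, for the standard metric on $\Sigma$. For H\"older roofs over a topologically transitive sub-shift of finite type the formalism is well understood (see the references in the Introduction): $s\mapsto P(\sigma,-s\tau_2)$ is real-analytic and strictly decreasing, so its root $h$ is well defined, and the H\"older potential $-h\tau_2$ has a unique equilibrium state. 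By the correspondence above this is the unique measure of maximal entropy of $(Y_{\tau_2},\Phi_{\tau_2})$, proving~(\ref{12}).

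Part~(\ref{11}) is the heart of the matter, and here I would transplant the mechanism of Proposition~\ref{existence1}. We may assume $\htop(\sigma)>0$, the alternative being a single periodic orbit, for which the flow carries a unique invariant measure and~(\ref{11}) does not arise. The aim is to produce an uncountable family of ergodic $\sigma$-invariant measures that are made \emph{simultaneously} maximizing by the roof. One natural device is to fix an uncountable family $\{\mu_\alpha\}$ of ergodic measures of a common entropy value $c$ (such families exist inside a positive-entropy subsystem of $\Sigma$, and choosing one with no periodic points removes the coboundary obstruction to prescribing integrals) and to construct $\tau_1$, a perturbation of $\tau$ concentrated on a small region, so that $\int\tau_1\,d\mu_\alpha=c/h$ for every $\alpha$; then the ratio $h_\sigma(\mu_\alpha)/\int\tau_1\,d\mu_\alpha$ is the same constant for all $\alpha$. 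It remains to arrange that this common value is the global maximum, that is, $P(\sigma,-h\tau_1)=0$ with the supremum attained precisely on $\{\mu_\alpha\}$, which then furnishes uncountably many ergodic measures of maximal entropy.

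The main obstacle is exactly this last coupling: pinning infinitely many integral constraints $\int\tau_1\,d\mu_\alpha=c/h$ while keeping $\|\tau-\tau_1\|_0<\epsilon$ and, simultaneously, guaranteeing global maximality of the common ratio (made subtle by the fact that $h$ itself depends on $\tau_1$). That the two can be reconciled is precisely where the continuous, non-H\"older regime is essential: by part~(\ref{12}) no H\"older roof admits more than one maximizing measure, so the vanishing of $\mu\mapsto h_\sigma(\mu)-h\int\tau_1\,d\mu$ on an uncountable set must be engineered using the flatness available to merely continuous potentials. I would expect $\tau_1$ to be produced as a uniform limit of an explicit iterative construction, building the exceptional region and the measures $\mu_\alpha$ in tandem as in Proposition~\ref{existence1}, with the $\epsilon$-budget controlled by concentrating the perturbation on cylinders of rapidly decreasing measure.
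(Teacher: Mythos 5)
Your reduction of both parts to equilibrium-state problems on the base (via Abramov's formula and the root equation $P(-h\tau_i)=0$) is correct and is exactly the paper's starting point (Remark \ref{rem:root}), and your argument for part~(\ref{12}) is sound: approximating $\tau$ by a positive locally constant (hence H\"older) roof and invoking Theorem \ref{ru} is a minor variant of the paper's route, which instead uses the density of H\"older potentials together with the normalization $\rho_n=P(h_n)-h_n$. (One small caveat: your claim that the zero-entropy case ``does not arise'' is the wrong framing --- if the base were a single periodic orbit, part~(\ref{11}) would simply be \emph{false}; what saves the statement is the paper's standing definition of transitivity, namely that some power $A^m$ has all entries positive, which together with an alphabet of size at least $2$ forces positive topological entropy.)

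Part~(\ref{11}), however, has a genuine gap, and it is the heart of the theorem. You never invoke Israel's theorem (Theorem \ref{is}), which is the engine of the paper's proof; instead you sketch a hands-on construction --- fix an uncountable family $\{\mu_\alpha\}$ of ergodic measures of common entropy $c$, tune $\tau_1$ so that $\int\tau_1\,d\mu_\alpha=c/h$ for all $\alpha$, then force global maximality --- and you explicitly concede that you cannot carry out the final coupling (uncountably many integral constraints, the $\epsilon$-budget, and maximality, with $h$ itself depending on $\tau_1$). That acknowledged obstacle is not a technicality: overcoming it amounts to re-proving Israel's theorem, or to performing a Kucherenko--Thompson style construction, neither of which you supply. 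You also misdescribe Proposition \ref{existence1}: it is not an iterative construction but a soft application of Theorem \ref{is} to the open set $\mathcal{L}$. The paper's actual proof of~(\ref{11}) is short and soft: by Theorem \ref{is} choose continuous $f_n\to -h(\Phi_\tau)\tau$ uniformly, each with uncountably many ergodic equilibrium states; set $\phi_n:=P(f_n)-f_n$, so $-\phi_n$ has the same equilibrium states (adding a constant changes nothing) and $P(-\phi_n)=0$, while continuity of the pressure gives $P(f_n)\to P(-h(\Phi_\tau)\tau)=0$ and hence $\phi_n\to h(\Phi_\tau)\tau$; finally put $\tau_n:=\phi_n/h(\Phi_\tau)$, which is continuous, positive for large $n$, and uniformly close to $\tau$. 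The identity $P(-h(\Phi_\tau)\tau_n)=0$ is precisely what resolves the coupling you worried about: it pins the flow entropy, $h(\Phi_{\tau_n})=h(\Phi_\tau)$, so the measures of maximal entropy of $(Y_{\tau_n},\Phi_{\tau_n})$ are exactly the images under $R$ of the equilibrium states of $-\phi_n$, of which uncountably many are ergodic.
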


Theorem \ref{thm:1} states that an arbitrarily small time reparametrization of  $(Y_\tau,\Phi_\tau)$ yields  systems  with uncountably many ergodic measures of maximal entropy and systems with a unique one. This result provides a description of the space of suspension flows defined over a fixed base. It shows that both,  flows with unique and those with uncountably many measures of maximal entropy form dense sets.


Our results are of  a complementary nature to those obtained by  Kucherenko and Thompson \cite[Theorem 2]{kt2} (see also \cite{kt1}). They consider suspension flows in which the base is a specific sub-shift of finite type, namely the full-shift on finitely many symbols.  With a constructive method they produce  roof functions for which the corresponding suspension flow has finitely many measures of maximal entropy. Moreover, they have control on the support of these measures. In a recent preprint, Kucherenko and Thompson \cite{kt3}  also constructed examples with  uncountably many measures of maximal entropy. 

Our strategy is different and only relies on results of Israel (Theorem \ref{is}) and Ruelle (Theorem \ref{ru}). Theorem \ref{is} says that among continuous functions, there is a dense set with uncountably many equilibrium states. We remark that this is a purely existencial result. In contrast, Theorem \ref{ru} establishes the uniqueness of equilibrium states for H\"older continuous functions.  In the proof of Theorem \ref{thm:1}\ref{11} we use Israel's result to $C^0$-perturb the original roof function into one for which the suspension flow has uncountably many measures of maximal entropy.  With a similar argument we deduce Theorem \ref{thm:1}\ref{12} from Theorem \ref{ru}. Therefore with our method we can prove the density of suspension flows with unique and with uncountably many measures of maximal entropy.

It is worth emphasizing that Theorem \ref{thm:1}\ref{11} holds for a class of dynamical systems that goes far beyond sub-shifts of finite type. Let $X$ be a compact metric space and $T:X\to X$ a continuous map. It follows from our proof that it suffices to verify that Theorem \ref{is} holds for $(X,T)$. This is in fact the case whenever $(X,T)$ has finite topological entropy, the set of ergodic measures is entropy dense and the entropy map is upper semi-continuous; these are the key assumption in the proof of Theorem \ref{is} (for precise definitions and discussion see Remark \ref{rem:entden}).

We finish this paper with a version of Israel's result for suspension flows (see Theorem \ref{thm:issus}).


\section{Preliminaries} \label{pre}

In this section we provide definitions and properties that will be used throughout this paper.

\subsection{Sub-shifts of finite type and thermodynamic formalism}
Let $A$ be a transition matrix defined on the finite alphabet $\mathcal{A}:=\{1, 2, \dots, N\}$. That is, the entries of the matrix $A=A(i,j)_{\mathcal{A} \times \mathcal{A}}$  are zeros and ones.  The one-sided sub-shift of finite type 
 $(\Sigma_A, \sigma)$ associated to $A$ is the set
\begin{equation*}
 \Sigma_A := \left\{ (x_n)_{n \in \N} : A(x_n, x_{n+1})=1  \text{ for every } n \in \N  \right\}, 
\end{equation*} 
 together with the shift map $\sigma: \Sigma_A  \to \Sigma_A $ defined by
$\sigma(x_1, x_2, \dots)=(x_2, x_3,\dots)$. 

We endow $\mathcal{A}$ with the discrete topology and $\mathcal{A}^\N$ with the product topology. On $\Sigma_A$ we consider  the induced topology given by the natural inclusion $\Sigma_A\hookrightarrow \mathcal{A}^\N$. This makes $\Sigma_A$ a compact topological space. The following metric generates this topology, 
\begin{equation*} \label{metric}
d(x,y):=
\begin{cases}
1 & \text{ if } x_1\ne y_1; \\
2^{-k} & \text{ if  } x_i=y_i \text{ for  } i \in \{1, \dots , k\} \text{ and } x_{k+1} \neq y_{k+1}; \\
0 & \text{ if } x=y.
\end{cases}
\end{equation*} 

We say that $(\Sigma_A,\sigma)$ is topologically transitive if there exists a natural number $m \in \N$ such that all the entries of the matrix $A^m$ are positive. Equivalently, there exists a point $x \in \Sigma_A$ having a dense orbit. When the context is clear we will simply write $(\Sigma, \sigma)$. \\

{\bf Standing assumption:} In this paper $(\Sigma,\sigma)$ will always be topologically transitive and the alphabet has cardinality at least $2$. \\ 

Denote by $ \mathcal{M}(\Sigma, \sigma)$ the set of $\sigma-$invariant probability measures and endow it with the weak$^*$ topology.  Recall that a  sequence $(\mu_n)_n$ in $\mathcal{M}(\Sigma, \sigma)$  converges in the weak$^*$ topology to the measure $\mu$ if and only if for every continuous function $f:\Sigma \to \R$ we have that $\lim_{n \to \infty} \int f d \mu_n = \int f d \mu$. The space 
$\mathcal{M}(\Sigma, \sigma)$ is a compact   convex set whose extreme points are the ergodic measures. It is actually a Choquet simplex (each measure is represented in a unique way as a generalized convex combination of the ergodic measures), see \cite[Theorem 6.10 and p.153]{wa} for more details on the structure of $\mathcal{M}(\Sigma, \sigma)$.

Thermodynamic formalism in this setting has been studied, at least,  since the early 1970s.  An account of the theory can be found in \cite[Chapter 9]{wa}. We now recall some of the basic definitions and properties. For every continuous function $\phi:\Sigma \to \R$ the \emph{topological pressure} of $\phi$ is  defined by
\begin{equation*}
P(\phi):= \sup \left\{h(\mu) + \int \phi d \mu : \mu \in \mathcal{M}(\Sigma, \sigma) 			\right\},
\end{equation*}
where $h(\mu)$ denotes the entropy of the measure $\mu$ (see \cite[Chapter 4]{wa} for details). The \emph{topological entropy} of $(\Sigma, \sigma)$ is defined by $h(\sigma):=P(0)$, that is, the pressure of the constant function equal to zero. A measure $\mu \in  \mathcal{M}(\Sigma, \sigma)$ satisfying $P(\phi)= h(\mu) + \int \phi d \mu$, is called an
\emph{equilibrium measure} or \emph{equilibrium state} for $\phi$. If $\phi$ is the constant function equal to zero we call the corresponding equilibrium measures, \emph{measures of maximal entropy}. Since the entropy map $\mu \mapsto h(\mu)$ is upper semi-continuous (see \cite[Theorem 8.2]{wa}) and $\mathcal{M}(\Sigma, \sigma)$ is compact, every continuous function has at least one equilibrium measure.

Denote by $C(\Sigma)$ to the space of continuous real valued functions  on $\Sigma$ endowed  with the uniform norm $\|\cdot\|_0$.  The pressure map
$P(\cdot): C(\Sigma) \to \R$ is continuous and convex functional (see \cite[Theorem 9.7]{wa}).

\begin{remark}
Let $A$ be a transition matrix defined on the finite alphabet $\mathcal{A}$. The two-sided sub-shift of finite type 
 $(\Sigma, \sigma)$ associated to $A$ is the set
\begin{equation*}
 \Sigma := \left\{ (x_n)_{n \in \Z} : A(x_n, x_{n+1})=1  \text{ for every } n \in \Z  \right\}, 
\end{equation*} 
together with the shift map $\sigma: \Sigma  \to \Sigma $ defined by $(\sigma(x))_i=x_{i+1}$, for every $i \in \Z$. All the results discussed in this section remain valid for two-sided sub-shifts of finite type.
\end{remark}

\subsection{Suspension flows} Let $(\Sigma, \sigma)$ be a topologically transitive one-sided sub-shift of finite type and  $\tau \colon \Sigma \to \R^+$ be a positive continuous function. Consider the space
\begin{equation*}\label{shift}
Y_\tau= \{ (x,t)\in \Sigma  \times \R \colon 0 \le t \le\tau(x)\},
\end{equation*}
with the points $(x,\tau(x))$ and $(\sigma(x),0)$ identified for
each $x\in \Sigma $. The \emph{suspension semi-flow} over $\sigma$
with \emph{roof function} $\tau$ is the semi-flow $\Phi_\tau = (
\zeta_t)_{t \ge 0}$ on $Y_\tau$ defined by
\[
 \zeta_t(x,s)= (x,
s+t) \ \text{whenever $s+t\in[0,\tau(x)]$.}
\]
In particular,
\[
 \psi_{\tau(x)}(x,0)= (\sigma(x),0).
\]
We denote such semi-flow by $(Y_\tau, \Phi_{\tau})$. If the context is clear we use the notation $(Y, \Phi)$. If $(\Sigma,\sigma)$ is a two-sided sub-shift of finite type, then $(Y_\tau,\Phi_{\tau})$, where  $\Phi_\tau = (\zeta_t)_{t \in \R}$ is called the \emph{suspension flow} over $\sigma$ with roof function $\tau$; in this case the  acting group is $\R$. For simplicity we state most of our results in Section \ref{sec:mme} in the semi-flow case, but we emphasize they hold in both situations with no distinction (see Remark \ref{rem:onetotwo}).

The natural topology on $Y$ can be obtained by the so called Bowen-Walters metric (see \cite[Section 4]{bw}). Denote by $\M (Y, \Phi)$ the space of $\Phi$-invariant probability measures on $Y$. It is a classical result by Ambrose and Kakutani \cite{ak} that if $\text{Leb}$ denotes the one dimensional Lebesgue measure the map $R: \mathcal{M} (\Sigma, \sigma) \to \M (Y, \Phi)$ defined by 
\begin{equation*} \label{R}
R(\mu):=(\mu \times \text{Leb})|_{Y} /(\mu \times \text{Leb})(Y),
\end{equation*}
is a bijection. A description of the thermodynamic formalism for suspension semi-flows can be found in \cite[Chapter 6]{pp}.   The pressure of a continuous function $g: Y \to \R$ is defined by
\begin{equation*}
P_{\Phi}(g):= \sup \left\{h(\nu) + \int g d \nu : \nu \in \M (Y, \Phi)		\right\}.
\end{equation*}
Again, a measure attaining the supremum in the definition of $P_\Phi(g)$ is called an equilibrium measure for $g$. The \emph{topological entropy} of $(Y, \Phi)$ is defined by $$h(\Phi):=P_{\Phi}(0),$$ that is, the pressure of the constant function equal to zero. A measure $\nu\in \M(Y,\Phi)$ is a \emph{measure of maximal entropy} of $(Y,\Phi)$  if $h(\nu)=h(\Phi)$. 

The entropy of measures in $\M(Y,\Phi)$ can be calculated by means of Abramov's formula (see \cite{a} or \cite[p.91]{pp}):
\[h(\nu)= \frac{h(R^{-1}(\nu))}{ \int \tau dR^{-1}(\nu)},\]
for every $\nu\in \M(Y,\Phi)$.  Given a continuous function $g \colon Y\to\R$ we define the function
$\Delta_g\colon\Sigma\to\R$~by
\[
\Delta_g(x):=\int_{0}^{\tau(x)} g(x,t) \, dt.
\]
The function $\Delta_g$ is also continuous. By Kac' s formula (see \cite[p.90]{pp}) we have
\begin{equation} \label{rela}
\int_{Y} g \, d\nu= \frac{\int_\Sigma \Delta_g\, d
R^{-1}(\nu)}{\int_\Sigma\tau \, d R^{-1}(\nu)},
\end{equation}
for every $\nu\in\M(Y,\Phi)$. Moreover, if $g$ is a H\"older function on $Y$ then $\Delta_g$ is also a H\"older function on $\Sigma$ (see \cite[Proposition 18]{bs}).

 We finish this section with an important observation. 
\begin{remark}\label{rem:root} There is a close relation between the thermodynamics of $(Y,\Phi)$ and that of $(\Sigma , \sigma)$. Indeed, see \cite[Proposition 6.1]{pp}, $P_{\Phi}(g)$ is the unique solution of the equation
\begin{equation*}
P(\Delta_g - t \tau )=0.
\end{equation*}
In particular, the topological entropy of $(Y, \Phi)$ is the unique root of the equation
\begin{equation*}
P(- t \tau )=0,
\end{equation*}
and the measures of maximal entropy for the flow are of the form $R(\mu)$ with $\mu$ equilibrium measure for $-h(\Phi) \tau$.
\end{remark}

\section{Measures of maximal entropy}\label{sec:mme}
In this section we prove the main results of this paper. We first prove the existence of a suspension flow defined over a sub-shift of finite type which admits uncountably many ergodic measures of maximal entropy. For this we can consider an arbitrary sub-shift satisfying our standing assumptions and a suitable roof function. We also prove Theorem \ref{thm:1}, the main result of this paper. Roughly speaking, we prove that any suspension flow defined over a sub-shift of finite type can be slightly perturbed into one with uncountably many ergodic measures of maximal entropy and into one with a unique measure of maximal entropy. This result highlights the differences between the thermodynamic formalism of continuous and H\"older potentials. As mentioned in the introduction, for simplicity we state the result for one-sided sub-shifts and for semi-flows. However, all results are valid for two-sided shifts and suspension flows (see Remark  \ref{rem:onetotwo}).

Let $(\Sigma, \sigma)$ be a topologically transitive one-sided sub-shift of finite type.  The following result is due to Ruelle (see \cite{ru} and \cite[Theorem 3.5]{pp}),

\begin{theorem}[Ruelle] \label{ru}
If $\phi \in C(\Sigma)$ is a H\"older function, then it has a unique equilibrium measure. In particular, the set of functions having unique equilibrium measures is dense in $C(\Sigma)$.
\end{theorem}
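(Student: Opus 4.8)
The plan is to reduce the second assertion to the first and then to prove the uniqueness statement by means of the Ruelle transfer operator. For the reduction, note that on the compact metric space $\Sigma$ the Lipschitz functions form a subalgebra of $C(\Sigma)$ that contains the constants and separates points (each $x \mapsto d(x,x_0)$ is Lipschitz), so by the Stone--Weierstrass theorem they are dense in $(C(\Sigma), \|\cdot\|_0)$. Since Lipschitz functions are in particular H\"older, the H\"older functions are dense in $C(\Sigma)$; granting that each of them has a unique equilibrium measure, the set of functions with a unique equilibrium measure is dense, which is the ``In particular'' clause.

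For the uniqueness claim, fix a H\"older potential $\phi$ and introduce the transfer operator $\L_\phi$ on $C(\Sigma)$ given by $(\L_\phi f)(x) = \sum_{\sigma(y)=x} e^{\phi(y)} f(y)$. The heart of the matter is the Ruelle--Perron--Frobenius theorem: there exist a number $\lambda = e^{P(\phi)} > 0$, a strictly positive H\"older eigenfunction $h$ with $\L_\phi h = \lambda h$, and a Borel probability measure $\nu$ with $\L_\phi^{*}\nu = \lambda\nu$, normalized so that $\int h\,d\nu = 1$. Setting $\mu := h\,\nu$ produces a $\sigma$-invariant probability measure satisfying the Gibbs property: there are constants $0 < C_1 \le C_2$ with
\[
C_1 \le \frac{\mu([x_1 \cdots x_n])}{\exp\!\big(-nP(\phi) + \sum_{k=0}^{n-1}\phi(\sigma^k x)\big)} \le C_2
\]
for every $x \in \Sigma$ and every $n$. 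I expect this step to be the main obstacle: producing a \emph{simple} leading eigenvalue with a spectral gap is exactly where the H\"older hypothesis is indispensable. It is obtained either from a Lasota--Yorke (Ionescu-Tulcea--Marinescu) inequality on the H\"older space, which yields quasi-compactness of $\L_\phi$, or from a Birkhoff cone-contraction argument in the Hilbert projective metric; in the merely transitive (non-mixing) case one first passes to the mixing components furnished by the spectral decomposition.

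It then remains to show that $\mu$ is the \emph{unique} equilibrium measure. That $\mu$ attains the supremum defining $P(\phi)$ follows from the Gibbs bounds together with the Shannon--McMillan--Breiman theorem. For uniqueness I would invoke convex duality: by the variational principle the equilibrium measures of $\phi$ are precisely the tangent functionals to the convex map $P(\cdot)$ at $\phi$, that is, those $m \in \M(\Sigma,\sigma)$ with $P(\phi + \psi) \ge P(\phi) + \int\psi\,dm$ for all $\psi \in C(\Sigma)$. Analytic perturbation theory applied to the isolated simple eigenvalue $\lambda$ shows that $t \mapsto P(\phi + t\psi) = \log\lambda(\phi+t\psi)$ is differentiable at $t=0$ for every H\"older $\psi$, with derivative $\int\psi\,d\mu$; hence every equilibrium measure $m$ satisfies $\int\psi\,dm = \int\psi\,d\mu$ for all H\"older $\psi$, and since these are dense in $C(\Sigma)$ we get $m = \mu$. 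Alternatively, uniqueness follows directly from the Gibbs property, since any two Gibbs measures for $\phi$ are mutually absolutely continuous with Radon--Nikodym derivative bounded above and below and are both ergodic, hence equal; combined with the fact that every equilibrium measure is a Gibbs measure, this again yields $m = \mu$.
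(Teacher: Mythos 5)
Your proposal is correct and matches the paper's treatment of this statement: the paper gives no proof at all, quoting the theorem as classical with references to Ruelle \cite{ru} and Parry--Pollicott \cite[Theorem 3.5]{pp}, and the argument in those sources is precisely the one you outline, namely the Ruelle--Perron--Frobenius transfer-operator theorem, the Gibbs property, and uniqueness via differentiability of the pressure (equivalently, via mutual absolute continuity of Gibbs measures). Your reduction of the density clause to the uniqueness clause, using Stone--Weierstrass to show Lipschitz (hence H\"older) functions are dense in $C(\Sigma)$, is also the standard and correct way to obtain the ``in particular'' assertion.
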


\begin{remark}
It is a direct consequence of Theorem \ref{ru} and Remark \ref{rem:root} that if $(Y, \Phi)$ is a suspension flow over $(\Sigma, \sigma)$ with a H\"older continuous roof function, then it has a unique measure of maximal entropy.
\end{remark}

A complementary result by Israel (\cite[Theorem V.2.2]{is})  shows how different thermodynamic formalism is in the space of continuous functions.

\begin{theorem}[Israel] \label{is}
The set of continuous functions having uncountably many ergodic equilibrium measures is dense in $C(\Sigma)$.
\end{theorem}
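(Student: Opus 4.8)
The plan is to recast equilibrium states as tangent functionals to the pressure and then to reduce the statement to a problem about the face structure of the simplex $\M(\Sigma,\sigma)$. Since $C(\Sigma)$ is separable and $P\colon C(\Sigma)\to\R$ is convex and $1$-Lipschitz for $\|\cdot\|_0$, I would first identify, for each $\phi$, the equilibrium states of $\phi$ with the tangent functionals to $P$ at $\phi$: viewing $\mu\in\M(\Sigma,\sigma)$ as the functional $\xi\mapsto\int\xi\,d\mu$, the measure $\mu$ is an equilibrium state for $\phi$ exactly when $P(\phi+\xi)\ge P(\phi)+\int\xi\,d\mu$ for all $\xi\in C(\Sigma)$. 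The variational principle yields one direction, while upper semi-continuity of the entropy map together with compactness of $\M(\Sigma,\sigma)$ gives attainment and the converse; these are two of the three ingredients highlighted in Remark~\ref{rem:entden}. Thus the set $\mathrm{Eq}(\phi)$ of equilibrium states is precisely the subdifferential $\partial P(\phi)$.

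Next I would record that $\mathrm{Eq}(\phi)$ is a weak$^*$ closed \emph{face} of $\M(\Sigma,\sigma)$: since $\nu\mapsto h(\nu)+\int\phi\,d\nu$ is affine, any proper convex decomposition of an equilibrium state consists again of equilibrium states. Hence the extreme points of $\mathrm{Eq}(\phi)$ are exactly the ergodic equilibrium measures, and Theorem~\ref{is} becomes the assertion that the set of $\phi$ whose equilibrium face has \emph{uncountably many} extreme points is dense in $C(\Sigma)$. It is important to note that mere non-uniqueness does not suffice here: $\M(\Sigma,\sigma)$ is a Poulsen simplex, and it has closed faces that are finite-dimensional (for instance the segment joining two ergodic measures), so an equilibrium face with two ergodic points need not be uncountable. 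The uncountable family must therefore be produced directly, which is where entropy density of the ergodic measures enters as a reservoir.

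To build a single such $\phi$ densely, I would fix $\psi\in C(\Sigma)$, $\epsilon>0$, and an equilibrium state $\mu_*$ of $\psi$, and use entropy density to select a Cantor set $\{\nu_c\}_{c\in C}$ of ergodic measures that are weak$^*$ close to $\mu_*$, pairwise separated, and whose entropies are close to $h(\mu_*)$, so that the pressure defects $P(\psi)-h(\nu_c)-\int\psi\,d\nu_c$ are uniformly small. I would then construct the target potential as a convergent series $\phi=\psi+\sum_{n}\eta_n$ with $\sum_n\|\eta_n\|_0<\epsilon$, where each $\eta_n$ is an exposing (Bishop--Phelps type) perturbation that raises the selected measures toward a common value of $\nu\mapsto h(\nu)+\int\phi\,d\nu$, while the separation of the $\nu_c$ guarantees they remain distinct extreme points of the resulting face. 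In the limit all the $\nu_c$ lie in $\mathrm{Eq}(\phi)$, giving uncountably many ergodic equilibrium measures.

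The main obstacle is precisely this quantitative exposing step: one must keep the total perturbation below $\epsilon$ while simultaneously forcing an entire Cantor set of measures to attain the maximum of an affine functional, something that is impossible for arbitrarily chosen targets and explains why the result is only existential. Reconciling the two demands is exactly what entropy density (nearby ergodic measures of almost maximal entropy) and finiteness of the topological entropy $h(\sigma)$ provide, by making the defects small enough to be absorbed; organizing the induction so that the norms $\|\eta_n\|_0$ sum to less than $\epsilon$ and so that measures exposed at one stage are not knocked out of the maximizing face at later stages is the technical heart of the argument.
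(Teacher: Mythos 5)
Your first two steps---identifying $\mathrm{Eq}(\phi)$ with the set of tangent functionals to the pressure at $\phi$ (using finite topological entropy and upper semi-continuity of the entropy map), and observing that $\mathrm{Eq}(\phi)$ is a weak$^*$ closed face whose extreme points are the ergodic equilibrium states---are correct, and they are exactly the ingredients the paper isolates in Remark \ref{rem:entden}. The gap is in your third step, which is the entire content of the theorem, and the route you propose through it is not one that can be made to work. You fix a Cantor set $\{\nu_c\}_{c\in C}$ of ergodic measures \emph{in advance} and try to force all of them into the equilibrium face of a single potential $\phi=\psi+\sum_n\eta_n$ with $\sum_n\|\eta_n\|_0<\epsilon$. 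No mechanism is supplied for even one step of this induction: Bishop--Phelps-type theorems never expose a \emph{prescribed} functional; they only produce \emph{some} tangent functional at \emph{some} nearby point, close in the dual norm to an approximately tangent one. A prescribed invariant measure need not be an equilibrium state of any potential $\epsilon$-close to $\psi$, so its pressure defect cannot simply be ``absorbed''; and you give no reason why measures exposed at stage $n$ would survive the perturbations of stage $n+1$. You acknowledge the difficulty (``impossible for arbitrarily chosen targets''), but the acknowledgement is not a substitute for the missing argument: as written, the proof stops exactly where the theorem begins.

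Israel's actual argument (\cite[Theorem V.2.2]{is}, which is what the paper cites and what Remark \ref{rem:entden} sketches) avoids prescribing the family altogether, and this is the idea you are missing. Entropy density yields a \emph{non-atomic} probability measure $\nu_\epsilon$ supported on ergodic measures $\mu$ with $h(\mu)+\int\psi\,d\mu>P(\psi)-\epsilon$. Form its barycenter $\bar\mu=\int\mu\,d\nu_\epsilon(\mu)$; since entropy is affine, $\bar\mu$ is approximately tangent to the pressure at $\psi$. Israel's quantitative Bishop--Phelps theorem (\cite[Chapter V]{is}) then produces a potential $G$ with $\|G-\psi\|_0$ small \emph{together with} an equilibrium state $\rho$ of $G$ satisfying $\|\rho-\bar\mu\|_{\mathrm{TV}}\le 1$, the norm being the dual norm of $C(\Sigma)$, i.e.\ total variation. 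Now mutual singularity of distinct ergodic measures does the work: if the ergodic decomposition of $\rho$ charged only countably many ergodic measures, that countable set would be $\nu_\epsilon$-null, and the uniqueness of the ergodic decomposition would force $\|\rho-\bar\mu\|_{\mathrm{TV}}=2$, a contradiction. Hence the decomposition of $\rho$ is not purely atomic, and since $\mathrm{Eq}(G)$ is a closed face, almost every measure in that decomposition is an ergodic equilibrium state of $G$; so $G$ has uncountably many of them. The trade-off that makes the $\epsilon$-bound achievable is precisely that you do \emph{not} control which ergodic measures end up in the face---only that there are uncountably many, lying in the support of $\nu_\epsilon$---whereas your scheme demands control that the convex-analytic tools do not provide.
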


The following result is a consequence of Israel's theorem.

\begin{proposition}\label{existence1} There exists a suspension flow $(Y_\tau, \Phi_{\tau})$ with continuous roof function $\tau:\Sigma \to \R^+$ which admits uncountably many measures of maximal entropy.
\end{proposition}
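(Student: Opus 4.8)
The plan is to combine Israel's theorem (Theorem~\ref{is}) with the characterization of measures of maximal entropy via roots of the pressure equation (Remark~\ref{rem:root}). Recall from Remark~\ref{rem:root} that the measures of maximal entropy of $(Y_\tau,\Phi_\tau)$ are exactly the measures $R(\mu)$ where $\mu$ is an equilibrium measure for the potential $-h(\Phi)\tau$. Thus the number of ergodic measures of maximal entropy of the flow equals the number of ergodic equilibrium measures of $-h(\Phi)\tau$ on the base (the bijection $R$ of Ambrose--Kakutani maps ergodic measures to ergodic measures). So it suffices to produce a continuous positive roof function $\tau$ for which the potential $-h(\Phi)\tau$ has uncountably many ergodic equilibrium measures on $(\Sigma,\sigma)$.

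The cleanest route is to first use Israel's theorem to find a continuous function $\phi\in C(\Sigma)$ possessing uncountably many ergodic equilibrium measures, and then realize $\phi$ (up to scaling and an additive constant) as $-s\tau$ for a suitable positive continuous $\tau$ and positive constant $s$. Concretely, I would set $\tau := c - \phi$ for a constant $c$ chosen large enough that $\tau$ is bounded away from zero, so $\tau$ is positive and continuous. Adding the constant $c$ does not change the equilibrium measures, since for any $\mu$ one has $h(\mu)+\int(\phi)\,d\mu = h(\mu)+\int(\phi-c)\,d\mu + c$; hence $\phi$ and $\phi-c=-\tau$ have exactly the same set of equilibrium measures. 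Therefore $-\tau$ has uncountably many ergodic equilibrium measures.

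The remaining step is to arrange that the potential whose equilibrium measures matter for the flow, namely $-h(\Phi)\tau$, is a positive scalar multiple of $-\tau$, so that it inherits the same equilibrium measures. This follows because for any constant $s>0$ the potentials $-\tau$ and $-s\tau$ have the same equilibrium measures: if $\mu$ maximizes $h(\mu)+\int(-\tau)\,d\mu$ then by a standard variational argument (equivalently, by differentiating or by the defining equation $P(-s\tau)=0$ from Remark~\ref{rem:root}) the maximizers are preserved under positive rescaling of the potential, since rescaling only reparametrizes the pressure. More directly, $h(\Phi)$ is by definition the unique root $s=h(\Phi)$ of $P(-s\tau)=0$, and the equilibrium measures of $-h(\Phi)\tau$ are precisely those $\mu$ attaining $h(\mu)=h(\Phi)\int\tau\,d\mu$; one checks that these coincide with the equilibrium measures of $-\tau$ after normalization. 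Pushing these uncountably many distinct ergodic base measures forward via $R$ yields uncountably many distinct ergodic measures of maximal entropy for $(Y_\tau,\Phi_\tau)$.

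The main obstacle is the bookkeeping in the last step: one must be careful that the scaling factor relating $-\tau$ and $-h(\Phi)\tau$ genuinely preserves the \emph{set} of equilibrium measures rather than merely some of them. The safest formulation avoids this subtlety altogether by applying Israel's theorem directly to the potential $-\tau$ of interest, or by noting that equilibrium measures of $-s\tau$ are invariant under positive scaling of $s$ because the entropy and the integral term scale compatibly in the variational principle. I would verify this invariance as the one genuinely substantive point, and treat the positivity of $\tau$ and the transfer through $R$ as routine.
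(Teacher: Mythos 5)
Your reduction to the base system (via Remark~\ref{rem:root} and the Ambrose--Kakutani bijection $R$) and the trick of adding a constant to make the roof positive are both fine, but your third step contains a genuine error: it is false that $-\tau$ and $-s\tau$ have the same equilibrium measures for every $s>0$. Equilibrium measures of $\psi$ maximize $\mu \mapsto h(\mu)+\int\psi\,d\mu$, and the entropy term does \emph{not} scale when $\psi$ is replaced by $s\psi$; maximizing $h(\mu)+s\int\psi\,d\mu$ is a genuinely different variational problem. Concretely, on the full shift on two symbols take $\psi$ depending only on the first coordinate with values $a\neq b$: the unique equilibrium state of $s\psi$ is the Bernoulli measure with weights $\bigl(e^{sa}/(e^{sa}+e^{sb}),\, e^{sb}/(e^{sa}+e^{sb})\bigr)$, which varies with $s$. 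So the phrase ``the entropy and the integral term scale compatibly in the variational principle'' is precisely what fails, and there is no reason the uncountably many ergodic equilibrium measures of $-\tau$ survive as equilibrium measures of $-h(\Phi_\tau)\tau$. Moreover, with your choice of an \emph{arbitrary} large constant $c$ one has $h(\Phi_\tau)\neq 1$ in general (indeed $P(-\tau)=P(\phi)-c$, which vanishes only for one value of $c$), so the two potentials really do differ by a nontrivial scaling. Your fallback suggestion of applying Israel's theorem ``directly to $-\tau$'' runs into the same obstruction: whatever roof you produce, the potential that matters is $-h(\Phi_\tau)\tau$, and density alone says nothing about that particular rescaled potential.

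The fix is a normalization, and it is exactly what the paper does. Observe that $h(\Phi_\tau)=1$ if and only if $P(-\tau)=0$, and in that case the scaling issue disappears: since $h(\mu)\le\int\tau\,d\mu$ for all invariant $\mu$, the maximizers of the Abramov ratio $h(\mu)/\int\tau\,d\mu$ are precisely the measures with $h(\mu)=\int\tau\,d\mu$, i.e.\ the equilibrium measures of $-\tau$ itself. To arrange this you cannot take any large $c$; you must take $c=P(\phi)$ exactly, i.e.\ $\tau:=P(\phi)-\phi$, so that $P(-\tau)=P(\phi-P(\phi))=P(\phi)-P(\phi)=0$. Positivity of $\tau$ then forces the extra requirement $\sup\phi<P(\phi)$, which is why the paper does not apply Israel's theorem to an arbitrary $\phi$ but inside the set $\mathcal{L}=\{\phi\in C(\Sigma): \sup\phi<P(\phi) \text{ and } \phi>0\}$, which is open (continuity of $P$) and nonempty (positive constants lie in it because $h(\sigma)>0$), so that density supplies a $\tau_0\in\mathcal{L}$ with uncountably many ergodic equilibrium states. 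With $\tau=P(\tau_0)-\tau_0$ the rest of your argument then goes through verbatim: $-\tau$ has the same equilibrium measures as $\tau_0$, $h(\Phi_\tau)=1$, and $R$ transports these to uncountably many ergodic measures of maximal entropy of the flow.
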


\begin{proof}
Since the pressure is continuous the set
\begin{equation*}
\mathcal{L}:=\left\{ \phi \in C(\Sigma) : \sup \phi < P(\phi) \text{ and }\phi > 0	\right\}
\end{equation*}
is  open. Moreover,  for $c >0$ the constant  potential $\phi \equiv c$ belongs to $\mathcal{L}$. Hence,  $\mathcal{L}$ is a non-empty set (by our standing assumption $(\Sigma,\sigma)$ has positive topological entropy). By Theorem \ref{is} there exists $\tau_0 \in \mathcal{L}$ with uncountably  many ergodic equilibrium states. Define
\begin{equation*}
\tau:= P(\tau_0)-\tau_0.
\end{equation*}
Then $\tau >0$, $P(-\tau)=P(\tau_0) - P(\tau_0)=0$ and $-\tau$ has uncountably many equilibrium states. In light of Remark \ref{rem:root} we can conclude that the the suspension flow
$(Y_\tau, \Phi_{\tau})$ has uncountably many ergodic measures of maximal entropy.
\end{proof}

Our next result establishes that with a small continuous time  reparametrization of the flow we can obtain a suspension flow with uncountably many measures of maximal entropy. Let $(Y_{\tau_1}, \Phi_{\tau_1})$ and $(Y_{\tau_2}, \Phi_{\tau_2})$ be two suspension semi-flows with corresponding continuous roof functions  $\tau_1: \Sigma \to \R$ and $\tau_2: \Sigma \to \R$. Then the semi-flow $(Y_{\tau_1}, \Phi_{\tau_1})$ is a time  reparametrization  of  $(Y_{\tau_2}, \Phi_{\tau_2})$. Indeed,  the map $\pi:Y_1 \to Y_2$ defined by
\begin{equation*}
\pi(x,s)=\left(x ,  \frac{\tau_2(x)}{\tau_1(x)} s  \right),
\end{equation*}
preserves the orbit structure. It actually send leafs to leafs and corresponds to the time change. A natural way of estimating the size of the time  reparametrization is by $\| \tau_1 - \tau_2\|_0$ or equivalently, since $\tau_1$ is bounded below, by $\| \tau_2/ \tau_1  - 1 \|_0$ (see \cite{ci} for a related discussion). 

%

\begin{proof}[Proof of Theorem \ref{thm:1}]
We divide the proof in two Lemmas.

\begin{lemma}\label{lem1} Let $\psi\in C(\Sigma)$ be such that $P(-\psi)=0$. Then there exist two sequences  of continuous potentials $(\phi_n)_{n\in \N}$ and $(\rho_n)_{n \in\N}$ such that 
\begin{enumerate}
\item Both sequences converge uniformly to $\psi$, $\lim_{n\to \infty} \phi_n=\psi$ and $\lim_{n\to \infty} \rho _n=\psi$.
\item For every $n \in \N$ we have $P(-\phi_n)=0$ and $P(-\rho_n)=0.$
\item For every $n \in \N$ the potential $-\phi_n$ has infinitely many ergodic equilibrium states.
\item For every $n \in \N$ the potential $-\rho_n$ has a unique equilibrium state.
\end{enumerate}
\end{lemma}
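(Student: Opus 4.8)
The plan is to reduce everything to a single normalization device combined with the two density results already available, Theorem \ref{is} (Israel) and Theorem \ref{ru} (Ruelle). The key observation is that the equilibrium states of a potential are unchanged when a constant is added to it: if $c \in \R$, then $h(\mu) + \int (f + c)\, d\mu = \left( h(\mu) + \int f\, d\mu \right) + c$, so $f$ and $f + c$ have exactly the same equilibrium states. Consequently, given any $g \in C(\Sigma)$, if I set
\[
\phi := P(g) - g,
\]
then on one hand $P(-\phi) = P\bigl(g - P(g)\bigr) = P(g) - P(g) = 0$, and on the other hand $-\phi = g - P(g)$ differs from $g$ by a constant and therefore has precisely the same equilibrium states as $g$. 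This is exactly the renormalization used in the proof of Proposition \ref{existence1}, and it lets me transfer any prescribed behaviour of the equilibrium states of $g$ to a potential $-\phi$ whose pressure is normalized to zero.

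With this device, the construction of $(\phi_n)$ goes as follows. By Theorem \ref{is} the set of continuous functions with uncountably many ergodic equilibrium states is dense in $C(\Sigma)$, so I would pick a sequence $(g_n)_{n \in \N}$ converging uniformly to $-\psi$ with each $g_n$ having uncountably many ergodic equilibrium states. Setting $\phi_n := P(g_n) - g_n$, the preceding paragraph immediately gives $P(-\phi_n) = 0$ and shows that $-\phi_n$ has uncountably (in particular infinitely) many ergodic equilibrium states, which are conditions (2) and (3). It remains to verify the uniform convergence $\phi_n \to \psi$. Since the pressure map is continuous (see \cite[Theorem 9.7]{wa}) and $g_n \to -\psi$, we have $P(g_n) \to P(-\psi) = 0$; writing
\[
\phi_n - \psi = P(g_n) - (g_n + \psi),
\]
and using $\|g_n + \psi\|_0 \to 0$, I obtain $\|\phi_n - \psi\|_0 \le |P(g_n)| + \|g_n + \psi\|_0 \to 0$, which is condition (1).

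The construction of $(\rho_n)$ is identical, with Theorem \ref{is} replaced by Theorem \ref{ru}. By that theorem the set of potentials with a unique equilibrium state is dense in $C(\Sigma)$ (it contains all H\"older functions), so I would choose $(h_n)_{n \in \N}$ converging uniformly to $-\psi$ with each $h_n$ admitting a unique equilibrium state. Putting $\rho_n := P(h_n) - h_n$, the normalization again gives $P(-\rho_n) = 0$, and since $-\rho_n$ differs from $h_n$ by a constant it inherits the unique equilibrium state of $h_n$, giving conditions (2) and (4); the same continuity estimate as above yields $\rho_n \to \psi$ uniformly, which is condition (1).

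The proof is short once the normalization is in place, so I do not anticipate a serious obstacle; the only point that requires care is that the two requirements on each approximant — that it be uniformly close to $\psi$ and that its pressure be \emph{exactly} normalized to zero — be met simultaneously. This is precisely what continuity of the pressure at $-\psi$, together with the hypothesis $P(-\psi) = 0$, delivers: enforcing the pressure normalization costs only the additive constant $P(g_n)$, and that constant is itself forced to be small. I note finally that no positivity of $\phi_n$ or $\rho_n$ is needed at this stage; positivity of the eventual roof functions in Theorem \ref{thm:1} will be recovered afterwards, since uniform convergence to a positive limit makes the approximants positive for all large $n$.
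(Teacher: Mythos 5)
Your proposal is correct and follows essentially the same route as the paper: both choose approximants via Theorems \ref{is} and \ref{ru}, normalize by $\phi_n = P(f_n) - f_n$ so that $P(-\phi_n)=0$ while preserving the equilibrium states (since adding a constant does not change them), and use continuity of the pressure together with $P(-\psi)=0$ to get uniform convergence. Your write-up is in fact slightly more explicit than the paper's on the constant-shift invariance of equilibrium states and the triangle-inequality estimate $\|\phi_n - \psi\|_0 \le |P(g_n)| + \|g_n + \psi\|_0$, but the argument is the same.
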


\begin{proof} By Theorem \ref{is} there exists a sequence of continuous functions $(f_n)_n$ converging uniformly to $-\psi$ such that for every $n \in \N$ the function $f_n$ has uncountably many ergodic equilibrium measures. For every $n \in \N$ define
$\phi_n=P(f_n)-f_n$. Since the pressure is a continuous function and $P(-\psi)=0$ we have that 
\begin{equation*}
\lim_{n\to\infty}P(f_n)=0\text{ and }  \lim_{n\to \infty} \phi_n=\psi.
\end{equation*}
Moreover, each function $-\phi_n$ has uncountably many ergodic equilibrium measures and $P(-\phi_n)=0$. Analogously, by Theorem \ref{ru} there exists a sequence of continuous functions $(h_n)_n$ converging uniformly to $-\psi$ such that for every $n \in \N$ the function $h_n$ has a unique equilibrium measure. For every $n \in \N$ define $\rho_n=P(h_n)-h_n$. It follows from the continuity of the pressure and $P(-\psi)=0$ that
\begin{equation*}
\lim_{n\to\infty}P(h_n)=0\text{ and }  \lim_{n\to \infty} \rho_n=\psi.
\end{equation*}
Note that $P(-\rho_n)=0$. Moreover, every function $\rho_n$ has a unique equilibrium measure. 
\end{proof}

\begin{lemma} \label{density} Let $\tau: \Sigma \to \R$ be a positive continuous potential. Then there exist two sequences $(\tau_n)_{n\in\N}$  and $(\tau'_n)_{n\in \N}$  of continuous positive functions such that 
\begin{enumerate}
\item The sequences $(\tau_n)_{n\in \N}$ and $(\tau'_n)_{n\in\N}$ converge uniformly to $\tau$.
\item For every $n \in \N$ the suspension semi-flow with roof function $\tau_n$ has uncountably many measures of maximal entropy.
\item For every $n \in \N$ the suspension semi-flow with roof function $\tau'_n$ has a unique measure of maximal entropy.
\end{enumerate} 
\end{lemma}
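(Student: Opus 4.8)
The plan is to reduce Lemma \ref{density} to Lemma \ref{lem1} by rescaling the roof function so that the topological entropy of the suspension flow serves as the normalizing constant in front of the potential. First I would set $h:=h(\Phi_\tau)$, the topological entropy of the suspension semi-flow with roof $\tau$. By Remark \ref{rem:root}, $h$ is the unique root of $P(-t\tau)=0$. Since $\tau>0$ and $\Sigma$ is compact, the map $t\mapsto P(-t\tau)$ is strictly decreasing with $P(0)=h(\sigma)>0$ (by the standing assumption) and $P(-t\tau)\to-\infty$ as $t\to\infty$; hence $h>0$, which is what makes the rescaling below legitimate.

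Next, set $\psi:=h\tau$. Then $P(-\psi)=P(-h\tau)=0$, so $\psi$ satisfies the hypothesis of Lemma \ref{lem1}. Applying that lemma yields sequences $(\phi_n)_{n}$ and $(\rho_n)_{n}$ in $C(\Sigma)$, both converging uniformly to $\psi=h\tau$, with $P(-\phi_n)=P(-\rho_n)=0$ for every $n$, such that $-\phi_n$ has uncountably many ergodic equilibrium states while $-\rho_n$ has a unique equilibrium state. I would then define
\[
\tau_n:=\frac{\phi_n}{h},\qquad \tau'_n:=\frac{\rho_n}{h}.
\]

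It then remains to verify the three assertions. For (1), dividing the uniform convergence $\phi_n\to h\tau$ by the positive constant $h$ gives $\tau_n\to\tau$ uniformly, and likewise $\tau'_n\to\tau$. For positivity, $\tau$ is bounded below by some $c>0$ on the compact space $\Sigma$, so uniform convergence forces $\tau_n,\tau'_n>0$ for all sufficiently large $n$; discarding the finitely many initial terms (which does not affect uniform convergence) I may assume positivity for all $n$. For (2) and (3), I would recompute the entropy of the rescaled flows: since $P(-h\tau_n)=P(-\phi_n)=0$, Remark \ref{rem:root} shows that $t=h$ is the unique root of $P(-t\tau_n)=0$, so $h(\Phi_{\tau_n})=h$, and identically $h(\Phi_{\tau'_n})=h$. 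Again by Remark \ref{rem:root}, the measures of maximal entropy of $(Y_{\tau_n},\Phi_{\tau_n})$ are precisely the measures $R(\mu)$ with $\mu$ an equilibrium state for $-h(\Phi_{\tau_n})\tau_n=-h\tau_n=-\phi_n$. Because $R$ is an affine bijection between the simplices $\M(\Sigma,\sigma)$ and $\M(Y_{\tau_n},\Phi_{\tau_n})$ it maps ergodic measures to ergodic measures, so the uncountably many ergodic equilibrium states of $-\phi_n$ produce uncountably many ergodic measures of maximal entropy; the same argument applied to $-\rho_n$ gives a unique measure of maximal entropy for $\tau'_n$.

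This reduction is essentially the whole argument, and there is no serious obstacle once Lemma \ref{lem1} is available. The only points requiring care are the positivity of the rescaled roofs, handled by the uniform lower bound $\tau\ge c>0$, and the bookkeeping identity $h(\Phi_{\tau_n})=h$, which is exactly what allows the equilibrium states of $-\phi_n$ to be reinterpreted as measures of maximal entropy for the rescaled flow rather than as equilibrium states of some unrelated potential.
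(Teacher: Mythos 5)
Your proposal is correct and is essentially the paper's own argument: the paper applies Lemma \ref{lem1} to $\psi = h(\Phi_\tau)\tau$ and defines $\tau_n := \tau + \bigl(\phi_n - h(\Phi_\tau)\tau\bigr)/h(\Phi_\tau)$, which simplifies to exactly your $\tau_n = \phi_n/h$. Your write-up is in fact slightly more careful than the paper's, since you explicitly verify $h(\Phi_\tau)>0$, the positivity of the rescaled roofs for large $n$, and the ergodicity-preserving role of the bijection $R$.
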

\begin{proof} 
In order to prove this result we will construct a sequence of positive continuous functions $(\tau_n)_n$ converging uniformly to $\tau$ such that $P(- h(\Phi_{\tau}) \tau_n)=0$ and $- h(\Phi_{\tau}) \tau_n$ has uncountably many ergodic measures of maximal entropy. 
By Lemma \ref{lem1} there exits a sequence  of continuous functions $(\phi_n)_n$ converging uniformly to $h(\Phi_{\tau}) \tau$, such that for every $n \in \N$ we have that $P(-\phi_n)=0$ and $\phi_n$ has uncountably many ergodic equilibrium measures. Define the functions $\tau_n: \Sigma \to \R$ by
\begin{equation*}
\tau_n(x):= \tau(x) + \frac{\phi_n(x) - h(\Phi_{\tau}) \tau(x)}{ h(\Phi_{\tau})}.
\end{equation*}
Note that $(\tau_n)_n$ converges uniformly to $\tau$. Moreover, 
\begin{equation*}
- h(\Phi_{\tau}) \tau_n(x)= - h(\Phi_{\tau}) \tau(x) - \phi_n(x) + h(\Phi_{\tau}) \tau(x)= - \phi_n(x).
\end{equation*}
Thus, $P(- h(\Phi_{\tau}) \tau_n)=0$ and $- h(\Phi_{\tau}) \tau_n$ has uncountably many ergodic equilibrium measures. 
The construction of the sequence $(\tau'_n)_{n\in\N}$ is done analogously.
\end{proof}
The result follows considering the suspension semi-flows $(Y_{\tau'_m}, \Phi_{\tau_m})$ and $(Y_{\tau'_m}, \Phi_{\tau'_m})$ for $m \in \N$ sufficiently large.
\end{proof}

\begin{remark}
Note that the suspension semi-flows obtained by time reparametrization, $(Y_{\tau_n}, \Phi_{\tau_n})$ ,  $(Y_{\tau'_n}, \Phi_{\tau'_n})$ constructed in the proof of Theorem \ref{thm:1} have the same topological entropy as the original semi-flow $(Y_{\tau}, \Phi_{\tau})$.
\end{remark}

\begin{remark}\label{rem:onetotwo} 
All the results in this article, in particular Theorem \ref{thm:1}, also hold in the context of two-sided sub-shifts and suspension flows. For this it suffices to notice that Theorem \ref{is} and Theorem \ref{ru} also hold for two-sided sub-shifts. Indeed, a well known result by Sinai \cite[Proposition 1.2]{pp} allows to reduce the thermodynamic formalism of H\"older functions from two sided sub-shifts of finite type to one-sided ones.  Sinai proved that any H\"older function defined on a two-sided sub-shift is cohomologous a function that depends only on future coordinates. This result was later generalized by Walters \cite[Theorem 4(i)]{wa2} who found optimal regularity assumptions on the functions for the above property to hold. This readily implies that Theorem \ref{ru} also hold for two-sided sub-shifts.  On the other hand, Theorem \ref{is} can be directly proven  in the context of two-sided sub-shifts of finite type (see \cite[Theorem V.2.2]{is}). Moreover, the two-sided version of Israel's Theorem can also be deduced in a similar fashion as Ruelle's Theorem by means of a result by Walters (see \cite[Theorem 4 (iii)]{wa2}). In that theorem it is proven that for every  continuous function  on the two-sided shift there exists a function that depends only in  future coordinates, so that the difference of both functions belongs to the closure of the set of coboundaries. 
\end{remark}

\begin{remark}\label{rem:entden} As mentioned in the introduction, Theorem \ref{thm:1}\ref{11} holds for a class of base dynamical systems that goes far beyond sub-shifts of finite type. We proceed to explain the relevant assumptions here.  Let $X$ be a compact metric space and $T:X\to X$ a continuous map. Denote by  $\M_T$  the space of invariant probability measures of $(X,T)$ and by $\cE_T$ the subset of ergodic ones. 

\begin{definition}We say that $\cE_T$ is \emph{entropy dense} in $\M_T$ if for every $\mu\in \M_T$, there exists a sequence $(\mu_n)_n$ in $\E_T$ that converges in the weak$^*$ topology to $\mu$ and $\lim_{n\to\infty}h(\mu_n)=h(\mu)$. 
\end{definition}

\begin{definition} We say that the entropy map of $(X,T)$ is upper semi-continuous if for every sequence $(\mu_n)_n$ which converges in the weak$^*$ topology to $\mu$, then $\limsup_{n\to\infty}h(\mu_n)\le h(\mu)$. \end{definition}
 
It follows directly from the proof of \cite[Theorem V.2.2]{is} that if the topological entropy of $(X,T)$ is finite, $\cE_T$ is entropy dense in $\M_T$ and the entropy map of $(X,T)$ is upper semi-continuous, then Theorem \ref{is} holds for $(X,T)$. In this situation, the exact same proof of Theorem \ref{thm:1}\ref{11} yields  the density of suspension flows over $(X,T)$ with uncountably many ergodic measures of maximal entropy. We remark that these three properties hold for sub-shifts of finite type.

Regarding these assumptions, the finite entropy and the upper semi-continuity of the entropy map are crucial in order to identify tangent functionals to the pressure at $F:X\to \R$ with equilibrium states of $F$ (see \cite[Theorem 9.12]{wa}). This allows  to incorporate tools from functional analysis into the problem. The entropy density of $\cE_T$ is important to ensure that the sets  
$$A(F,\epsilon):= \left\{\mu\in\cE_T: P(F)-\epsilon<h(\mu)+\int Fd\mu \right\},$$
support a non-atomic probability measure, say $\nu_\epsilon$, for every $\epsilon>0$.  This is the fact that allows Israel to construct potentials with uncountably many ergodic equilibrium states. These are ergodic measures which lie in the support of $\nu_\epsilon$.

\end{remark}

\begin{remark}\label{rem:finn}
The proof of Theorem \ref{thm:1}\ref{12} is a consequence of Theorem \ref{ru} and the density of H\"older continuous potentials in the space of continuous functions (we always assume our phase space to be a compact metric space). Theorem \ref{ru} was generalized by Bowen \cite{bo2}, who proved that if $(X,T)$ is expansive and has the specification property, then every H\"older continuous potential has a unique equilibrium state. In particular, Theorem \ref{thm:1}\ref{12} holds under those assumptions on the base dynamics. 
\end{remark}

%
%

\subsection{Equilibrium measures for suspension semi-flows and flows} In this sub-section we make use of the previous results to show that in the context of suspension semi-flows and flows over sub-shifts of finite type, the set of potentials having uncountably many ergodic equilibrium measures is dense in the space of continuous functions $C(Y)$. This is the analog of Theorem \ref{is} for the suspension flow. As before, we state the result for suspension semi-flows but the same proof works in the two-sided case. 
%
%
%
%
%
%
%

\begin{theorem}\label{thm:issus}
Let $(Y, \Phi)$ be the suspension semi-flow over a one-sided sub-shift of finite type $(\Sigma, \sigma)$ with roof function $\tau: \Sigma \to \R$ and a continuous function
$g:Y  \to \R$. Given $\epsilon >0$ there exists a continuous function $h: Y \to \R$ having  uncountably many ergodic equilibrium measures and $\max_{(x,t)\in Y}|g(x,t)-h(x,t)|<\epsilon$.
 \end{theorem}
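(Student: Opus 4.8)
The plan is to transfer the problem from the flow $(Y,\Phi)$ to the base $(\Sigma,\sigma)$ and then invoke Israel's Theorem \ref{is} there. Combining Abramov's formula with Kac's formula \eqref{rela}, for every $\nu\in\M(Y,\Phi)$ and $\mu=R^{-1}(\nu)$ one has
\[
h(\nu)+\int_Y g\,d\nu=\frac{h(\mu)+\int_\Sigma \Delta_g\,d\mu}{\int_\Sigma \tau\,d\mu}.
\]
Writing $p_0:=P_\Phi(g)$ and $F:=\Delta_g-p_0\tau$, this identity together with Remark \ref{rem:root} (which gives $P(F)=0$) shows that $\nu$ is an equilibrium measure for $g$ if and only if $\mu=R^{-1}(\nu)$ is an equilibrium measure for $F$ on $\Sigma$: since $\int_\Sigma\tau\,d\mu>0$, the inequality $\frac{h(\mu)+\int\Delta_g\,d\mu}{\int\tau\,d\mu}\le p_0$ is equivalent to $h(\mu)+\int(\Delta_g-p_0\tau)\,d\mu\le 0=P(F)$, with equality exactly at the maximisers. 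As the Ambrose–Kakutani bijection $R$ restricts to a bijection between the ergodic measures of $(\Sigma,\sigma)$ and those of $(Y,\Phi)$, it suffices to produce a continuous $h:Y\to\R$ with $\|g-h\|_0<\epsilon$ such that $\Delta_h-P_\Phi(h)\tau$ has uncountably many ergodic equilibrium measures on $\Sigma$.

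First I would apply Theorem \ref{is} to the continuous function $F=\Delta_g-p_0\tau$: for any $\delta>0$ there is $f\in C(\Sigma)$ with $\|f-F\|_0<\delta$ having uncountably many ergodic equilibrium measures. The crucial step is to normalise, replacing $f$ by $\hat f:=f-P(f)$. This does not change the equilibrium measures (subtracting a constant leaves them invariant) but now $P(\hat f)=0$; by the ($1$-Lipschitz) continuity of the pressure, $|P(f)|=|P(f)-P(F)|\le\|f-F\|_0<\delta$, so $\|\hat f-F\|_0<2\delta$. I then take the perturbation to be \emph{fibre-constant}, setting
\[
h(x,t):=g(x,t)+\frac{\hat f(x)-F(x)}{\tau(x)}.
\]
Since $\tau$ is positive and continuous on the compact space $\Sigma$, we have $m:=\min_{x\in\Sigma}\tau(x)>0$, whence $\|h-g\|_0\le \|\hat f-F\|_0/m<2\delta/m$, which is $<\epsilon$ provided $\delta<\epsilon m/2$.

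It remains to verify the thermodynamic properties of this $h$, and this is where the normalisation pays off. Because the added term does not depend on $t$, $\Delta_h=\Delta_g+(\hat f-F)=\hat f+p_0\tau$. By Remark \ref{rem:root}, $P_\Phi(h)$ is the unique $s$ with $P(\Delta_h-s\tau)=P(\hat f+(p_0-s)\tau)=0$; since $P(\hat f)=0$ this is solved by $s=p_0$, so $P_\Phi(h)=p_0$ and therefore $\Delta_h-P_\Phi(h)\tau=\hat f$. By construction $\hat f$ has uncountably many ergodic equilibrium measures, and by the correspondence of the first paragraph these lift through $R$ to uncountably many ergodic equilibrium measures for $h$ on $Y$, completing the proof. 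The two-sided case is identical after invoking Remark \ref{rem:onetotwo}.

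I expect the main obstacle to be precisely the interplay between the two effects of a $C^0$-perturbation of $g$: it changes $\Delta_h$ and it simultaneously moves the pressure constant $P_\Phi(h)$. A careless perturbation makes $\Delta_h-P_\Phi(h)\tau$ land not on Israel's function $f$ but on a tilted version $f-c\tau$, whose equilibrium measures are not under control. Perturbing by the fibre-constant term $(\hat f-F)/\tau$, and above all normalising $\hat f$ so that $P(\hat f)=0$, is exactly what forces $P_\Phi(h)=p_0$ and pins the relevant base potential down to $\hat f$ itself rather than a $\tau$-tilt of it.
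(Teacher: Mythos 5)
Your proof is correct in substance and follows the same strategy as the paper's own: push the problem to the base via $\Delta_g$ and Remark \ref{rem:root}, apply Israel's theorem there, normalise so that the relevant base potential has zero pressure, and lift the perturbation back to the flow. The differences are local. Where the paper first replaces $g$ by $g_0=g+C>0$ and perturbs multiplicatively, $g_n=(F_n/\Delta_{g_0})\,g_0$ (with $F_n$ supplied by Lemma \ref{lem1}), you perturb additively by a fibre-constant term $(\hat f-F)/\tau$; this avoids the positivity reduction, gives the norm estimate $\|h-g\|_0\le\|\hat f-F\|_0/\min\tau$ directly, and replaces the appeal to Lemma \ref{lem1} by the inline normalisation $\hat f=f-P(f)$, which is exactly the trick used inside that lemma and is justified, as you say, by the $1$-Lipschitz property of the pressure. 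You also make explicit, via Abramov's and Kac's formulas, the equivalence ``$\nu$ is an equilibrium measure for $g$ if and only if $R^{-1}(\nu)$ is an equilibrium measure for $\Delta_g-P_\Phi(g)\tau$'', together with the preservation of ergodicity under $R$; the paper uses this correspondence implicitly.

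One caveat, which you share with the paper's proof rather than introduce: a continuous function on $Y$ must respect the identification $(x,\tau(x))\sim(\sigma(x),0)$, and neither your $h=g+(\hat f-F)/\tau$ nor the paper's $g_n=(F_n/\Delta_{g_0})\,g_0$ does so in general, since that would require the correcting term (respectively, the multiplier $F_n/\Delta_{g_0}$) to be $\sigma$-invariant. The standard repair works equally well for both constructions: distribute the correction along each fibre with a profile vanishing at the endpoints, e.g.\ replace your term $\bigl(\hat f(x)-F(x)\bigr)/\tau(x)$ by $\bigl(\hat f(x)-F(x)\bigr)\,\psi\bigl(t/\tau(x)\bigr)/\tau(x)$, where $\psi\colon[0,1]\to[0,\infty)$ is continuous with $\psi(0)=\psi(1)=0$ and $\int_0^1\psi(s)\,ds=1$. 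This leaves $\Delta_h=\hat f+p_0\tau$ unchanged, so the rest of your argument goes through verbatim, and it only costs the harmless factor $\|\psi\|_\infty$ in your sup-norm estimate.
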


\begin{proof} It will be convenient to consider $C>0$ such that $g_0:=g+C$ is strictly positive. In particular we have that $\Delta_{g_0}(x)>0$, for every $x\in\Sigma$.  By Lemma \ref{lem1} there exists a sequence of continuous functions $(\phi_n)_n$ with $\phi_n:\Sigma \to \R$ such that for every $n \in \N$ we have $P(-\phi_n)=0$, the function $-\phi_n$ has uncountably many ergodic equilibrium measures and
$\phi_n$ converges uniformly to $P_{\Phi}(g_0)\tau -\Delta_{g_0}$. For every $n \in \N$ define $F_n:\Sigma \to \R$ by
\begin{equation*}
F_n(x):= P_{\Phi}(g_0) \tau(x) - \phi_n(x).
\end{equation*}
For every $n \in \N$, define the function $g_n(x,t):Y \to \R$ by
\begin{equation*}
g_n(x,t):=\frac{F_n(x)}{\Delta_{g_0}(x)}g_0(x,t).
\end{equation*}
Note that  $F_n(x)=\Delta_{g_n}(x)$, indeed
\begin{equation*}
\Delta_{g_n}(x)= \int_0^{\tau(x)}g_n(x,t)dt=\frac{F_n(x)}{\Delta_{g_0}(x)}\int_0^{\tau(x)}g_0(x,t)dt=F_n(x).
\end{equation*}
Thus, $\Delta_{g_n}= P_{\Phi}(g_0)\tau -\phi_n$ and in particular $(\Delta_{g_n})_n
$ converges uniformly to $\Delta_{g_0}$.
Therefore, $P(\Delta_{g_n} -P_{\Phi}(g_0)\tau)=0$ which implies that $P_{\Phi}(g_n)= P_{\Phi}(g_0)$ and that $g_n$ has uncountably many equilibrium measures. For $n$ sufficiently large we have that 
$$\|\Delta_{g_n}-\Delta_{g_0}\|_0 <  \frac{\epsilon\inf_{x\in\Sigma}\Delta_{g_0}(x)}{\sup_{(x,t)\in Y}g_0(x,t)},$$ which implies that   $$|g_n(x,t)-g_0(x,t)|= g_0(x,t) \left|\frac{\Delta_{g_n}(x)}{\Delta_{g_0}(x)}-1 \right|< \epsilon.$$
We conclude that $\max_{(x,t)\in Y}|g_n(x,t)-g_0(x,t)|<\epsilon$, for every $n$ sufficiently large. Finally, consider  $n \in \N$ large enough and set $h:=g_n-C$. 

%
%

\end{proof}

\end{document}